\newtheorem{Theorem}{Theorem}
\newtheorem{Lemma}[Theorem]{Lemma}
\theoremstyle{definition}
\newtheorem{Example}[Theorem]{Example}
\theoremstyle{remark}
\newtheorem{Remark}[Theorem]{Remark}
\newcommand{\forOL}[1]{\textbf{for all} #1 \textbf{do}} 
\newcommand{\ifOL}[1]{\textbf{if} #1 \textbf{then}} 
\newcommand{\elseOL}[1]{\textbf{else}} 
\newcommand{\indentOL}{\quad}
\DeclareMathOperator{\LG}{LG} 
\DeclareMathOperator{\FM}{FM} %
\DeclareMathOperator{\DAM}{DAM} %
\newcommand{\abs}[1]{\left|#1\right|}
\DeclareMathOperator{\rank}{rk}
\newcommand{\st}{s.\,t.\ } 
\newcommand{\ie}{\textit{i.\,e.\ }} 
\newcommand{\eg}{\textit{e.\,g.\ }} 
\newcommand{\Z}{\mathbb{Z}}
\newcommand{\R}{\mathbb{R}}
\newcommand{\CC}{\mathbb{C}}
\newcommand{\Acal}{\mathcal{A}}
\newcommand{\Ccal}{\mathcal{C}}
\newcommand{\Gcal}{\mathcal G}
\newcommand{\Lcal}{\mathcal L}
\tikzstyle{bonn1colour}=[red!90!black]
\tikzstyle{bonn2colour}=[cyan!78!black]
\tikzstyle{bonn3colour}=[green!80!black]
\tikzstyle{bonn4colour}=[blue!90!black]
\definecolor{darkgreen}{rgb}{0,0.8,0}
\newcommand{\colourBonnA}{red}
\newcommand{\colourBonnB}{cyan}
\newcommand{\colourBonnC}{darkgreen}
\newcommand{\colourBonnD}{blue}
\newlength{\BonnLatticeFlatsLength}
\newlength{\topXPosetLayers}
\newlength{\topshiftXPosetLayers}
\newlength{\middleXPosetLayers}
\newlength{\distAPosetLayers}
\newlength{\distBPosetLayers}
\author{Matthias Lenz} 
\address{Universit\'e de Fribourg, D\'epartement de Math\'ematiques, 1700 Fribourg, Switzerland}
\email{maths@matthiaslenz.eu}
\thanks{%
The author was supported by a Swiss Government Excellence Scholarship for Foreign
Scholars and subsequently by a fellowship within the postdoc programme of the German
Academic Exchange Service (DAAD)%
}
\title
{%
 Computing the 
 poset of layers of a toric arrangement
}
\date{\today}
\address{%
}
\keywords{toric arrangement, poset of layers, algorithm}
\subjclass[2010]{Primary: 
05B35, 
06A07, 
06A11, 
14N20, 
52C35.
}
\begin{document}

\begin{abstract}
  A toric arrangement is an arrangement of subtori of codimension one in a real or complex torus.
  The poset of layers is the set of connected components of 
  non-empty intersections of these subtori, partially ordered by reverse inclusion.
  In this note we present an algorithm that computes this poset  in the central case.
  \end{abstract}

\maketitle

\section{Introduction}

A toric arrangement is an arrangement of subtori of codimension one in a  real or complex torus 
(\eg \cite{ardila-castillo-henley-2015,callegaro-delucchi-2017,delucchi-dantonio-2016,deconcini-procesi-2005,ehrenborg-readdy-slone-2009,lawrence-2011,moci-tutte-2012}).
Studying these objects is a natural step beyond 
the classical theory 
of hyperplane arrangements.
Both, the topology of the complement as well as the combinatorial structure of toric
arrangements are active areas of research. 
 Hyperplane arrangements have been studied intensively
  using methods from 
 combinatorics, algebra, algebraic geometry, 
 and topology 
\cite{orlik-terao-1992, stanley-2007}.
 While matroids capture combinatorial information about hyperplane arrangements,
 arithmetic mat\-roids
appear as one of the prominent combinatorial structures connected with toric arrangements \cite{branden-moci-2014,moci-adderio-2013}.

Toric arrangements are particularly important due to their connection with the problem of
counting lattice points in polytopes.
This was implicitly discovered in the 1980s 
by researchers working on splines \cite{BoxSplineBook}
  such as  Dahmen and Micchelli.
 It  was recently made more explicit and put in a broader context by De~Concini, Procesi, Vergne, and others 
 \cite{concini-procesi-book}.
 Delucchi and Riedel have recently investigated
 group actions on semimatroids \cite{delucchi-riedel-2015},
 a structural framework
 to study  toric arrangements and generalizations (\eg ``toric pseudoarrangements'').

 Given a central hyperplane arrangement, represented by the normal vectors of the hyperplanes, %
 it is relatively easy to come up with an algorithm to calculate its intersection lattice (Section~\ref{Section:HyperplaneArrangement}).
 The analogous problem for the poset of layers of a toric arrangement is more difficult.
 In Section~\ref{Section:LG} we present such an algorithm that
 uses some of the structural results of Delucchi and Riedel.

 In the case of central hyperplane arrangements,
 it is known that the intersection lattice is isomorphic to the lattice of flats of the underlying matroid.
 The lattice of flats of a matroid  is a geometric lattice, also in the the non-representable case.
 Vice versa, every geometric lattice is isomorphic to the lattice of flats of a simple matroid (\eg \cite{stanley-2007}).
 Much less is known in the case of toric arrangements:  their poset of layers is  
   always a quotient of a geometric semilattice \cite{delucchi-riedel-2015}.
 A matroidal analogue (\eg a ``poset of flats'' of an arithmetic matroid) is currently not known.

 It is known that 
 the intersection lattice has nice properties and carries a lot of information about a hyperplane arrangement.
 In the case of toric arrangements, things are less clear.
 On the positive side,  
 both, the
 number of connected components  of the complement of a real hyperplane arrangement
 and of a real toric arrangement are determined by the respective characteristic polynomials, which depend only on the posets
  \cite{ehrenborg-readdy-slone-2009,zaslavsky-1975}.
 In both cases, the characteristic polynomials also capture the Betti numbers of the complement of the complex  arrangements
  \cite{deconcini-procesi-2005,orlik-solomon-1980}.
 On the other hand, there are still several open questions that are currently being investigated.
  For example, it is known that geometric lattices are shellable, 
  but it is an open problem if the poset of layers is shellable.
  In the case of root systems, this has recently been established \cite{delucchi-girard-paolini-2017}.
 It is also not clear to what extent the poset of layers determines topological data
 such as the integer cohomology ring of the complement
 of a complex toric arrangement (see the discussion in
 \cite[Section~2.3.3]{callegaro-delucchi-2017} and
 \cite{lenz-unique-2017} for some recent progress).

 The availability of an algorithm  will help researchers to get a better intuition
 about the poset of layers of a toric arrangement and help to check conjectures on larger examples.
 A sage \cite{sage-80} implementation of this algorithm is forthcoming.

 The table in Figure~\ref{Figure:ContinuousDiscrete} provides an overview of how the structures mentioned 
 so far are related to each other.
 The connections with equivariant cohomology and $K$-theory are explained in \cite{deconcini-procesi-vergne-2013}.
 The descriptions as continuous and discrete objects 
 are due to the connection with volumes and the number of integer points in polytopes and 
 are used in the  literature on zonotopal algebra (\eg \cite{concini-procesi-book,holtz-ron-2011,lenz-arithmetic-2016}).

\begin{figure}[t]
 \begin{tabular}{l|l} 
    continuous  & discrete  \\\hline
    hyperplane arrangement & toric arrangement \\
    matroid    &  arithmetic matroid  \\
    geometric lattice &   (poset of layers) \\
    volume of a polytope & no.~of integer points \\
    cohomology & $K$-theory 
 \end{tabular}
 \caption{Some of the structures that are related to hyperplane arrangements and toric arrangements}
 \label{Figure:ContinuousDiscrete}
\end{figure}

\subsection*{Acknowledgements}
  The author would like to thank Emanuele Delucchi, Henri M\"uhle, and Sonja Riedel for some helpful conversations.

\section{Arrangements}

 \subsection{Setup}
 Throughout this note   
 $X=(x_1,\ldots, x_N)$ denotes a list (or sequence) of $N$ vectors that are contained in $\Z^d$ (for toric arrangements) or
 in a finite dimensional vector space (for hyperplane arrangements).
 We suppose that $X$ spans the ambient vector space. %
 Sometimes, we will consider $X$ as a $(d\times N)$-matrix with columns $x_1,\ldots x_N$.
 For $S\subseteq [N]$, $X[S]$ 
 denotes  the submatrix of $X$ whose columns are indexed by $S$
 and $\rank(S)$ denotes the rank of $S$ in the matroid theoretic sense, which is equal to the rank of $X[S]$ in the linear algebra sense.
 
 \subsection{Arrangements}

 In this subsection we will introduce three types of arrangements: finite hyperplane arrangements, (infinite) periodic
  hyperplane arrangements, and toric arrangements (see \eg \cite[Section~2]{delucchi-riedel-2015}).

 For $S\subseteq [N]$  and $k\in \Z^S$,  
 we define the subspace
 \begin{equation}
     H(S,k) := \{   \alpha \in \R^d :  x_i^t \alpha = k_i \text{ for all } i \in S\}.
 \end{equation}

 Now we define
 \begin{align*}
 \Acal_H(X)&:= \{ H(\{i\},0) : i \in [N]\}, \text{the \emph{central hyperplane arrangement},} 
 \label{eq:hyperplanearrangement}
 \\
 \Acal_P(X)&:= \{ H(\{i\},k) : i \in [N], k\in \Z  \}, \text{ the \emph{periodic hyperplane arrangement}, and} \\ 
 \Acal_T(X)&:= \{  H/\Z^d : H \in \Acal_P(X)\},  \text{ the \emph{central toric arrangement} in the torus } \R^d/\Z^d.
 \end{align*}
 If we translate some elements of $\Acal_H$ or $\Acal_T$, we obtain corresponding affine arrangements.

  Using the exponential map,
  the real (or compact) torus $\R^d / \Z^d$ is isomorphic to 
   $(S^1)^d$.
  As usual, $S^1 := \{ z \in \CC : \abs{z} = 1 \}$. 
  In this setting, it is common to describe a central toric arrangement as an arrangement of kernels of characters:
 every $v=(v_1, \ldots, v_d)\in \Z^d$ determines a \emph{character} of the torus, \ie a map 
 $\chi_v: (S^1)^d\to S^1$ via $\chi_v( (\alpha_1,\ldots, \alpha_d)):= \alpha_1^{v_1} \cdots  \alpha_d^{v_d}$.
 Each $x\in X$ defines a
   (possibly  disconnected) hypersurface 
  $S_x := \{ \alpha \in (S^1)^d  : \chi_x(\alpha) = 1 \}$.  

 It is also interesting to study toric arrangements in the complex (or algebraic) torus 
 $(\CC^*)^d$, which are defined in an analogous way. However, in this note we are mainly interested in  
 the posets of layers of central toric arrangements.
 These posets only depend on the list $X$ and 
 they are the same in the real and the complex case.

\begin{Example}
  \label{Example:Bonn}
 Let $X=({\color{\colourBonnA} ( 2 , 0  ) }, 
  {\color{\colourBonnB}  ( 0,   1  ) }, 
   {\color{\colourBonnC} ( 1,   -1  ) }, 
    {\color{\colourBonnD} ( 2,   2 )  }  )$.  
   The arrangements defined by the list $X$ are shown in Figure~\ref{Figure:BonnArrangements}.
   \end{Example}
 \begin{figure}[t]
 \begin{center}
\tikzsetnextfilename{Bonn_HyperplaneArrangement}
    \begin{tikzpicture}[line join=round, scale=1.6]
  \draw[bonn1colour, ultra thick](0,-1)--(0,1);
  \draw[bonn2colour, ultra thick](-1,0)--(1,0);
  \draw[bonn3colour, ultra thick](-0.9,-0.9)--(0.9,0.9);
  \draw[bonn4colour, ultra thick](-0.9,0.9)--(0.9,-0.9);
  \filldraw [black]  (0,0) circle (1.2pt); 
%
%
  \end{tikzpicture}
\tikzsetnextfilename{Bonn_ToricArrangement}
%
%
    \begin{tikzpicture}[line join=round, scale=2.6]
  \filldraw[fill=yellow!60!white,draw=black, thick](0,0) rectangle (1,1);
  \draw[bonn2colour, ultra thick](0,0)--(1,0);
  \draw[bonn3colour, ultra thick](0,0)--(1,1);
  \draw[bonn4colour, ultra thick](1,0)--(0,1);
  \draw[bonn4colour, ultra thick](0,0.5)--(0.5,0);
  \draw[bonn4colour, ultra thick](0.5,1)--(1, 0.	5);
  \draw[bonn2colour, thick](0,1)--(1, 1);
  \draw[bonn1colour, ultra thick](0,0)--(0,1);
  \draw[bonn1colour, ultra thick](0.5,0)--(0.5,1);
  \draw[bonn1colour, thick](1,0)--(1,1);
  \filldraw [black]  (0,0) circle (0.9pt) node[anchor=north east]{$0$};
  \filldraw [black] (0.5,0) circle (0.9pt) node[anchor=south west]{$b$};
  \filldraw [black] (0,0.5) circle (0.9pt) node[anchor= west]{$a$};
  \filldraw [black] (0.25,0.25) circle (0.9pt) node[anchor=west]{$d$};
  \filldraw [black] (0.75,0.75) circle (0.9pt) node[anchor= west]{$e$};
  \filldraw [black] (0.5,0.5) circle (0.9pt) node[anchor=west]{$c$};
  \filldraw [black!80!white] (1,0.5) circle (0.5pt); 
  \filldraw [black!80!white] (0.5,1) circle (0.5pt); 
  \filldraw [black!80!white] (1,0) circle (0.5pt); 
  \filldraw [black!80!white] (0,1) circle (0.5pt); 
  \filldraw [black!80!white] (1,1) circle (0.5pt); 

  %
%
  \draw[draw=black, thick, ->](-0.08, 0.05)--(-0.08, 0.95);
  \draw[draw=black, thick, ->](1.08, 0.05)--(1.08, 0.95);
  \draw[draw=black, thick, ->](0.05,-0.08)--(0.95,-0.08);
  \draw[draw=black, thick, ->](0.05,1.08)--(0.95,1.08);
  \end{tikzpicture}
%
%
   \input{./sketch_Bonn_toric_arrangement.tikz} 
%
%
 \end{center}
 \caption{A hyperplane arrangement and a toric arrangement in $(\R/\Z)^2$ and in $(S^1)^2$.
 Both arrangements are defined by the list $X$ in Example~\ref{Example:Bonn}.
\\
{\footnotesize
These image were created by the author in 2014. They are made available under the Creative Commons Attribution 4.0 International License. 
To view a copy of this license, visit \url{http://creativecommons.org/licenses/by/4.0/}.
}
}
 \label{Figure:BonnArrangements}
\end{figure}

 \subsection{Posets}

 Let $\Acal$ be a (possibly infinite) hyperplane arrangement.
 We define  $\Lcal(\Acal)$,
 the \emph{poset of intersections} of $\Acal$ as the set of 
 non-empty intersections of some of the hyperplanes in $\Acal$, ordered by reverse inclusion.
 In the finite case, this is a geometric lattice, the \emph{intersection lattice} of the hyperplane arrangement.
 In the periodic case, it is a geometric semilattice in the sense of Wachs and Walker \cite{wachs-walker-1986}.

 Let $\Acal_T$ be a toric arrangement. 
A \emph{layer} is a connected component of a non-empty intersection of some of the subtori in $\Acal_T$.
The \emph{poset of layers} $\Ccal(\Acal_T)$ of $\Acal_T$ is 
the set of its layers,  ordered by reverse inclusion
(cf.~Figure~\ref{Figure:BonnPosetLayer}).
This is an analogue of the intersection lattice of a hyperplane arrangement.

If $\Acal_T$ is a toric arrangement and $\Acal_P$ the corresponding periodic hyperplane arrangement, 
$\Ccal(\Acal_T)$ is a quotient of $\Lcal(\Acal_P)$ under an action induced by $\Z^d$
(cf.~\cite[Remark~2.5]{delucchi-riedel-2015}).

 \begin{figure}[t]
\begin{center}
\newcommand{\setBonnLatticeFlatsScaling}{%
 \setlength{\BonnLatticeFlatsLength}{0.8cm} %
}
\newcommand{\setBonnLayerPosetScaling}{%
  \setlength{\middleXPosetLayers}{-3.7cm} %
  \setlength{\distAPosetLayers}{1.3cm}
  \setlength{\distBPosetLayers}{1.5cm}   
  \setlength{\topXPosetLayers}{-3.5cm} %
  \setlength{\topshiftXPosetLayers}{1.25cm}
}
  \tikzsetnextfilename{Bonn_LatticeFlats}
\providecommand{\setBonnLatticeFlatsScaling}{%
 \setlength{\BonnLatticeFlatsLength}{1cm} 
}
%
%
%
\setBonnLatticeFlatsScaling
%
%
%
%
%
    \begin{tikzpicture}[line join=round, scale=2.6, >=latex'] 
      \node (top) at (0,0) {$X$};
      \node [below   of=top, xshift= -1.5\BonnLatticeFlatsLength] (left1)  {$\{x_1\}$};
      \node [below  of=top, xshift=-0.5\BonnLatticeFlatsLength ] (left2) {$\{x_2\}$};
      \node [below  of=top, xshift=0.5 \BonnLatticeFlatsLength] (right1) {$\{x_3\}$};
      \node [below  of=top, xshift=1.5 \BonnLatticeFlatsLength] (right2) {$\{x_4\}$};
      \node [below  of=top, yshift=-1 cm] (empty) {$\emptyset$};
      \draw [thick, <-] (top) -- (left1);
      \draw [thick, <-] (top) -- (left2);
      \draw [thick, <-] (top) -- (right1);
      \draw [thick, <-] (top) -- (right2);
      \draw [thick, <-] (left1) -- (empty);
      \draw [thick, <-] (left2) -- (empty);
      \draw [thick, <-] (right1) -- (empty);
      \draw [thick, <-] (right2) -- (empty);
  \end{tikzpicture}   \hspace*{-9mm}
  \tikzsetnextfilename{Bonn_PosetLayers}
%
%
%
%
%
\providecommand{\setBonnLayerPosetScaling}{%
  \setlength{\middleXPosetLayers}{-4.5cm} 
  \setlength{\distAPosetLayers}{1.7cm}
  \setlength{\distBPosetLayers}{2.1cm}   
  \setlength{\topXPosetLayers}{-4.5cm} 
  \setlength{\topshiftXPosetLayers}{1.8cm}
}
\setBonnLayerPosetScaling
\newlength{\topYPosetLayers}
\setlength{\topYPosetLayers}{1.8cm} 
\newlength{\unterbrechungPosetLayers}
\setlength{\unterbrechungPosetLayers}{5pt} 
    \begin{tikzpicture}[line join=round, scale=2.6,, >=latex']
      \node (empty) at (0,0) {$\emptyset$};
      \node [bonn1colour, above   of=empty, xshift=\middleXPosetLayers] (left1)  {$\{x_1\}$};
      \addtolength{\middleXPosetLayers}{\distAPosetLayers}
      \node [bonn1colour, above   of=empty, xshift=\middleXPosetLayers] (left2)  {$\{x_1\}$};
      \addtolength{\middleXPosetLayers}{\distBPosetLayers}
      \node [bonn2colour, above   of=empty, xshift=\middleXPosetLayers] (lcentre)  {$\{x_2\}$};
      \addtolength{\middleXPosetLayers}{\distBPosetLayers}
      \node [bonn3colour, above   of=empty, xshift=\middleXPosetLayers] (rcentre)  {$\{x_3\}$};
      \addtolength{\middleXPosetLayers}{\distBPosetLayers}
      \node [bonn4colour, above   of=empty, xshift=\middleXPosetLayers] (right1)  {$\{x_4\}$};
      \addtolength{\middleXPosetLayers}{\distAPosetLayers}
      \node [bonn4colour, above   of=empty, xshift=\middleXPosetLayers] (right2)  {$\{x_4\}$};
      \node [ above   of=empty, xshift=\topXPosetLayers, yshift=\topYPosetLayers] (l1r2)  {$a$};
      \addtolength{\topXPosetLayers}{\topshiftXPosetLayers}      
      \node [above   of=empty, xshift=\topXPosetLayers, yshift=\topYPosetLayers] (l2lcr2)  {$b$};
      \addtolength{\topXPosetLayers}{\topshiftXPosetLayers}      
      \node [above of=empty, xshift=\topXPosetLayers, yshift=\topYPosetLayers] (l123r1)  {$0$};
      \addtolength{\topXPosetLayers}{\topshiftXPosetLayers}      
      \node [ above   of=empty, xshift=\topXPosetLayers, yshift=\topYPosetLayers] (middle)  {$c$};
      \addtolength{\topXPosetLayers}{\topshiftXPosetLayers}      
      \node [ above   of=empty, xshift=\topXPosetLayers, yshift=\topYPosetLayers] (rcr2a)  {$d$};
      \addtolength{\topXPosetLayers}{\topshiftXPosetLayers}      
      \node [ above   of=empty, xshift=\topXPosetLayers, yshift=\topYPosetLayers] (rcr2b)  {$e$};
%
      \draw [thick, ->] (empty) -- (left1);
      \draw [thick, ->] (empty) -- (left2);
      \draw [thick, ->] (empty) -- (lcentre);
      \draw [thick, ->] (empty) -- (rcentre);
      \draw [thick, ->] (empty) -- (right1);%
      \draw [thick, ->] (empty) -- (right2);%
      \draw [thick, ->] (left1) -- (l123r1);
      \draw [thick, ->] (lcentre) -- (l123r1);
      \draw [thick, ->] (rcentre) -- (l123r1);
      \draw [thick, ->] (right1) -- (l123r1);
      \draw [thick, ->] (left2) -- (middle);
      \draw [thick, ->] (rcentre) -- (middle);
      \draw [thick, ->] (right1) -- (middle);
      \draw [thick, ->] (left2) -- (l2lcr2);
      \draw [thick, ->] (lcentre) -- (l2lcr2);
      \draw [thick, ->] (right2) -- (l2lcr2);
      \draw [thick, ->] (left1) -- (l1r2);
      \draw [thick, ->] (right2) -- (l1r2);
      \draw [thick, ->] (rcentre) -- (rcr2a);
      \draw [thick, ->] (right2) -- (rcr2a);
      \draw [thick, ->] (rcentre) -- (rcr2b);
      \draw [thick, ->] (right2) -- (rcr2b);
%
%
      \end{tikzpicture} 
\end{center}
\caption{The lattice of flats of the matroid and the poset of layers of the toric arrangement corresponding to Example~\ref{Example:Bonn}.
 Note that for the toric arrangement, $x_1$ and $x_4$ both define two one-dimensional layers.
 The labels of the zero-dimensional layers are the same as the ones in Figure~\ref{Figure:BonnArrangements}.}  
\label{Figure:BonnPosetLayer}
\end{figure}

 Both,  the poset of intersections  and the poset of layers are graded posets. In both cases, the rank of an intersection/layer
 $K$ can be defined as $d-\dim(K)$.
 
 Any finite poset can be represented by a directed acyclic graph as follows: the set of vertices is equal to the ground set of the poset
 and two vertices $x,y$ are connected by an arc $(x,y)$ if and only if $y$ covers $x$ in the poset.
 We will refer to this graph as the 
 \emph{Hasse diagram} of the poset.
 Vice versa, every directed acyclic graph represents a poset.

 \subsection{Matroidal structures}
 \label{Subsection:MatroidalStructures}
 
 The underlying matroid captures a lot of information about a finite, central hyperplane arrangement 
 \cite{orlik-terao-1992, stanley-2007}.
  For affine hyperplane arrangements,  semi-matroids play a similar role \cite{ardila-2007}.
 In the case of central toric arrangements,
  arithmetic matroids capture combinatorial and topological information.
   An arithmetic matroid is a matroid together with a multiplicity function $m$ that assigns
  a positive integer (the multiplicity) to each subset of the ground set
  \cite{branden-moci-2014,moci-adderio-2013,moci-tutte-2012}.
  In the representable case, \ie when the arithmetic matroid is given by a list of 
  vectors in a lattice, the multiplicity of an independent set $S$  is equal to the greatest common divisor 
  of all minors
   of size $\abs{S}$ of the matrix $X[S]$ (this is essentially \cite[Theorem 2.2]{stanley-1991}).
  For arbitrary sets $S\subseteq [N]$, $m(S) = \gcd({m(B) : B \subseteq  S \text{ and } \abs{B} = \rank(B) = \rank(S)})$ holds
   (cf.~\cite[p.~344]{moci-adderio-2013}, see also \cite[Lemma~8.4]{lenz-ppcram-2017}).

  Delucchi and Riedel recently introduced $G$-semimatroids that capture   
  the combinatorics of periodic arrangements of hyperplanes (and more general hypersurfaces) and their quotients \cite{delucchi-riedel-2015}.

 \section{Hyperplane arrangements and their intersection lattices}
 \label{Section:HyperplaneArrangement}
 
 In this section we will present an algorithm that can be used to calculate 
 the intersection lattice of a central hyperplane arrangement.
 It
 is shown in  Figure~\ref{Figure:AlgorithmHA}.
 The algorithm in  Section~\ref{Section:LG} that calculates the poset of layers of a toric arrangement
 is an extension of this algorithm.
 
 Note that the algorithm given here uses only the matroid data of the list $X$. 
 Given a matroid on $N$ elements as input, it would calculate its lattice of flats.

\begin{figure}[ht]
\begin{algorithmic}
\State \textbf{INPUT:} A list $X = (x_1,\ldots, x_N)$ of elements of some vector space $V$
\vspace*{1mm}
\State \textbf{OUTPUT:} A directed graph, the Hasse diagram of the intersection lattice of the central hyperplane arrangement defined by $X$
\vspace*{2mm}
\State \textbf{ALGORITHM:}
\State Create a digraph $\Gcal$ with vertices $ \{ v_S : S\subseteq [N] \} $ and no edges
\State \forOL{pairs $(T,S)\subseteq [N]^2$ with $T=S\setminus \{s\}$ for some $s\in S$} 
\State\indentOL
    \ifOL{$\rank(S) = \rank(T)$ } %
    \quad Add a blue arc $(v_T, v_S)$ to $\Gcal$
\State\indentOL
    \elseOL 
   \State 
   \quad
   Add a black arc $(v_T, v_S)$ to $\Gcal$
 \State Contract the blue arcs in $\Gcal$ 
 \State \Return  $\Gcal$
\end{algorithmic}
\caption{An algorithm that calculates the intersection lattice of a central hyperplane arrangement}
\label{Figure:AlgorithmHA}
\end{figure}

\section{Layer groups and the computation of the poset of layers}
\label{Section:LG}

\subsection{Layer groups}

  As usual, let $X$ be a $(d\times N)$-matrix with integer entries. 
  Let $S\subseteq [N]$.
  Then $X[S]$ denotes the submatrix of $X$ whose columns are indexed by $S$.
  Following \cite{delucchi-riedel-2015} we define the  groups
  \begin{align}
    W(S) &:=  X[S]^t \R^d \cap \Z^S,
    \\
    I(S) &:= X[S]^t \Z^d, \\
    Z(S) &:= \Z^S / I(S), \text{ and} \\
    \LG(S) &:= W(S) / I(S). 
  \end{align}
 The \emph{layer group} $\LG(S)$ is the torsion subgroup of $Z(S)$ \cite[Lemma~2.11]{delucchi-riedel-2015}.
  
 It is easy to see that $W(S) = \{ k \in \Z^S : H(S,k) \neq \emptyset\}$ \cite[Remark~2.4]{delucchi-riedel-2015}.
 Let us consider the map from $W(S)$ to the connected components of $\bigcup_{k\in \Z^S} H(S,k)$ that is defined 
  by $\varphi_S(k) := H(S,k)$.
  This map is bijective
  \cite[Lemma~2.6]{delucchi-riedel-2015}.
  Passing over to the quotient, we obtain a bijective map $\bar\varphi_S$ from $\LG(S)$ to the set of layers 
  of the toric arrangement that are defined by $S$
 \cite[Remark~2.31]{delucchi-riedel-2015}.
 This justifies the name layer group.
  We will write $C(S)$ to denote  the set of layers defined by $S$. 
  Note that all elements of $C(S)$ have dimension $d-\rank(S)$.

 The following two lemmas show that there are nice maps between different layer groups.
 Let $s\in S\subseteq [N]$ and let $T:=S\setminus \{s\}$.
 Let  $\pi_{S,T} : \Z^S \to \Z^T$ denote the projection that forgets the $s$ coordinate.

 \begin{Lemma}
  \label{Lemma:Ipi}
    $ \pi_{S,T} ( I(S) ) = I(T)$.
  \end{Lemma}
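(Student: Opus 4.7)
The plan is to prove both inclusions by directly unpacking the definition of $I(S) = X[S]^t \Z^d$ and observing that the projection $\pi_{S,T}$ on $\Z^S \to \Z^T$ corresponds precisely to dropping the row of $X[S]^t$ indexed by $s$, i.e.\ passing from $X[S]^t$ to $X[T]^t$.

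First I would fix notation: for any $v \in \Z^d$, the element $X[S]^t v \in \Z^S$ has $i$-th coordinate $\langle x_i, v\rangle$ for $i \in S$. Hence $\pi_{S,T}(X[S]^t v) = (\langle x_i, v\rangle)_{i \in T} = X[T]^t v$. This identity is the whole content of the lemma.

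For the inclusion $\pi_{S,T}(I(S)) \subseteq I(T)$, take an arbitrary $w \in I(S)$; by definition $w = X[S]^t v$ for some $v \in \Z^d$, and the above identity gives $\pi_{S,T}(w) = X[T]^t v \in I(T)$. For the reverse inclusion $I(T) \subseteq \pi_{S,T}(I(S))$, take $u = X[T]^t v \in I(T)$; then $X[S]^t v \in I(S)$ and $\pi_{S,T}(X[S]^t v) = X[T]^t v = u$, so $u \in \pi_{S,T}(I(S))$.

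There is essentially no obstacle here; the statement is a direct consequence of the block decomposition of $X[S]^t$ into $X[T]^t$ and the single row $x_s^t$ that $\pi_{S,T}$ discards. The only thing to be careful about is to consistently view $I(S)$ as a subgroup of $\Z^S$ (with $S$ as an index set) rather than of $\Z^{|S|}$, so that the projection $\pi_{S,T}$ genuinely corresponds to deleting the row indexed by $s$.
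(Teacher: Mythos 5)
Your proof is correct and rests on exactly the same observation as the paper's one-line proof, namely the identity $\pi_{S,T}(X[S]^t v) = X[T]^t v$ for all $v \in \Z^d$; you merely spell out the two inclusions that this identity immediately yields. Nothing further is needed.
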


 \begin{Lemma}
  \label{Lemma:Wpi}
    $ \pi_{S,T} ( W(S) ) \subseteq W(T)$.
  \end{Lemma}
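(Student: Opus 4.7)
The proof plan is a direct unpacking of definitions, with no serious obstacle. The key observation is that the projection $\pi_{S,T}$ that forgets the $s$-coordinate corresponds on the matrix side to deleting the row indexed by $s$ from $X[S]^t$, which yields exactly $X[T]^t$.

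First I would pick an arbitrary element $k \in W(S)$. By the definition of $W(S) = X[S]^t \R^d \cap \Z^S$, this means simultaneously that $k \in \Z^S$ and that there exists $\alpha \in \R^d$ with $k = X[S]^t \alpha$. Both conditions are needed, and they will be handled separately under $\pi_{S,T}$.

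Next I would verify compatibility of $\pi_{S,T}$ with the matrix product: since the rows of $X[S]^t$ are indexed by $S$ and forgetting the $s$-row of $X[S]^t$ produces $X[T]^t$, one has $\pi_{S,T}(X[S]^t \alpha) = X[T]^t \alpha$ for every $\alpha \in \R^d$. Hence $\pi_{S,T}(k) = X[T]^t \alpha \in X[T]^t \R^d$. At the same time, $\pi_{S,T}$ maps $\Z^S$ into $\Z^T$ coordinatewise, so $\pi_{S,T}(k) \in \Z^T$. Intersecting these two memberships gives $\pi_{S,T}(k) \in X[T]^t \R^d \cap \Z^T = W(T)$, which is the desired inclusion.

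Since this holds for every $k \in W(S)$, we obtain $\pi_{S,T}(W(S)) \subseteq W(T)$. Strictly speaking, nothing here is difficult; the only point worth flagging is that, in contrast with Lemma~\ref{Lemma:Ipi} where equality $\pi_{S,T}(I(S)) = I(T)$ is claimed, one cannot upgrade the inclusion to equality in general, because a $k \in W(T)$ realised by some $\alpha \in \R^d$ need not have $x_s^t \alpha \in \Z$. This asymmetry between the two lemmas is the only subtlety; the proof itself is a one-line diagram chase.
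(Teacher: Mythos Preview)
Your proof is correct and follows essentially the same approach as the paper: both take an element of $W(S)$, use the witness $\alpha\in\R^d$ from the definition, and observe that the same $\alpha$ serves as a witness for $\pi_{S,T}(k)\in W(T)$ (the paper phrases this coordinatewise, you phrase it via the row-deletion identity $\pi_{S,T}(X[S]^t\alpha)=X[T]^t\alpha$, but it is the same one-line argument). Your closing remark about why equality fails also matches the comment the paper makes immediately after its proof.
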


 Lemma~\ref{Lemma:Ipi} implies that $\pi_{S,T}$ induces a map
 $\bar\pi_{S,T} : Z(S) \to Z(T) $.
 Lemma~\ref{Lemma:Ipi} and Lemma~\ref{Lemma:Wpi} imply that 
 $\pi_{S,T}$ induces a map
 $\bar\pi_{S,T} : \LG(S) \to \LG(T) $.
 
 \begin{Lemma}
 \label{Lemma:Injection}
   Let $ \rank(S) = \rank(T)$.
   Then
   $ \bar \pi_{S,T} : \LG(S) \hookrightarrow \LG(T)$.

 \end{Lemma}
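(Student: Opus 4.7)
The plan is to unwind the definitions and use the hypothesis $\rank(S) = \rank(T)$ to identify two relevant kernels in $\R^d$. Take a class $[k] \in \LG(S)$ with $\bar\pi_{S,T}([k]) = 0$, meaning $\pi_{S,T}(k) \in I(T)$. I need to show $k \in I(S)$.

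First, I would use that $k \in W(S) = X[S]^t \R^d \cap \Z^S$, so there exists some $\alpha \in \R^d$ with $X[S]^t \alpha = k$. Applying $\pi_{S,T}$ to this equation simply drops the $s$-th coordinate, yielding $\pi_{S,T}(k) = X[T]^t \alpha$. The assumption $\pi_{S,T}(k) \in I(T) = X[T]^t \Z^d$ then furnishes some $\beta \in \Z^d$ with $X[T]^t \alpha = X[T]^t \beta$, i.e.\ $\alpha - \beta \in \ker(X[T]^t \colon \R^d \to \R^T)$.

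The key step is the identification $\ker X[S]^t = \ker X[T]^t$ as subspaces of $\R^d$. Since $T \subseteq S$, the inclusion $\ker X[S]^t \subseteq \ker X[T]^t$ is automatic. Conversely, $\ker X[U]^t$ is precisely the orthogonal complement of $\spa\{x_i : i \in U\}$, and the hypothesis $\rank(S) = \rank(T)$ means these spans coincide (since $\spa T \subseteq \spa S$ with equal dimension), so the two kernels agree. Consequently $\alpha - \beta \in \ker X[S]^t$, which gives $k = X[S]^t \alpha = X[S]^t \beta \in I(S)$, so $[k] = 0$ in $\LG(S)$. This proves injectivity.

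I do not expect any real obstacles here: the argument is essentially a diagram chase, and the rank hypothesis enters in exactly one place to promote a kernel coincidence for $X[T]^t$ to one for $X[S]^t$. The only point requiring a little care is keeping track of the fact that $\beta$ comes from $\Z^d$ (not merely $\R^d$), which is what places $k$ back in $I(S) = X[S]^t \Z^d$ rather than just in $X[S]^t \R^d$; but this is automatic from the construction above.
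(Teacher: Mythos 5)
Your proof is correct. It rests on the same use of the rank hypothesis as the paper's proof, but is organized differently. The paper expresses the extra column $s_{k+1}$ of $X[S]$ as a real linear combination of the columns of $X[T]$, deduces that the $s$-coordinate of any $z\in W(S)$ is determined by the remaining coordinates, concludes that $\pi_{S,T}\colon W(S)\to W(T)$ is injective, and only then descends to the quotient groups by invoking Lemma~\ref{Lemma:Ipi}. You instead work directly with the kernel of $\bar\pi_{S,T}$ on $\LG(S)$: you lift a class killed by $\bar\pi_{S,T}$ to $k=X[S]^t\alpha$ with $\alpha\in\R^d$, use $\pi_{S,T}(k)\in I(T)=X[T]^t\Z^d$ to produce an integral $\beta$, and then use $\ker X[S]^t=\ker X[T]^t$ (which is exactly where $\rank(S)=\rank(T)$ enters, equivalent to the paper's dependency relation) to conclude $k=X[S]^t\beta\in I(S)$. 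Your version has the small advantage that the passage to the quotient is handled explicitly rather than by the paper's one-line appeal to Lemma~\ref{Lemma:Ipi} (which, spelled out, requires combining injectivity on $W(S)$ with surjectivity of $\pi_{S,T}$ from $I(S)$ onto $I(T)$); the paper's version has the advantage of isolating the coordinate identity \eqref{equation:dependencyZ}, which it reuses in the remark following Lemma~\ref{Lemma:Wpi}. Your closing observation that $\beta$ must be taken in $\Z^d$ is exactly the right point of care.
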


 \begin{Lemma}
  \label{Lemma:Surjection}
        Let $ \rank(S) > \rank(T)$.
        Then  %
        $ \bar\pi_{S,T} : \LG(S) \twoheadrightarrow \LG(T)$.
 \end{Lemma}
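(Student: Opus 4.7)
The plan is to show surjectivity at the level of representatives: every $k\in W(T)$ lifts to some $\ell\in W(S)$ with $\pi_{S,T}(\ell)=k$, which immediately gives surjectivity after passing to the quotients by $I(S)$ and $I(T)$.

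Starting from a representative $k\in W(T)$ of a class $\bar k\in\LG(T)$, by definition of $W(T)$ there exists $\alpha_0\in\R^d$ with $X[T]^t\alpha_0=k$. The set of all such $\alpha$ is the affine subspace $\alpha_0+\ker(X[T]^t)$. To produce a lift $\ell\in\Z^S$ of the form $X[S]^t\alpha$, I only need to arrange that additionally $x_s^t\alpha\in\Z$, since then $\ell_i=x_i^t\alpha=k_i\in\Z$ for $i\in T$ and $\ell_s=x_s^t\alpha\in\Z$.

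The key point is that the hypothesis $\rank(S)>\rank(T)$ means $x_s\notin\spa(X[T])$. I would spell out that $\ker(X[T]^t)=\spa(X[T])^\perp$, and then observe that if $x_s\notin\spa(X[T])$ its orthogonal projection onto $\spa(X[T])^\perp$ is nonzero, so the linear functional $\alpha\mapsto x_s^t\alpha$ restricted to $\ker(X[T]^t)$ is nonzero and hence surjects onto $\R$. Therefore I can choose $\gamma\in\ker(X[T]^t)$ with $x_s^t\gamma=-x_s^t\alpha_0$, and set $\alpha:=\alpha_0+\gamma$; this gives $X[T]^t\alpha=k$ and $x_s^t\alpha=0\in\Z$.

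Setting $\ell:=X[S]^t\alpha$, then $\ell\in W(S)$ and $\pi_{S,T}(\ell)=X[T]^t\alpha=k$. Passing to cosets, $\bar\pi_{S,T}(\bar\ell)=\bar k$, which proves surjectivity of $\bar\pi_{S,T}\colon\LG(S)\twoheadrightarrow\LG(T)$. The only nontrivial step is the linear-algebra observation that the rank jump forces $x_s^t$ to be surjective on $\ker(X[T]^t)$; everything else is bookkeeping with the definitions and Lemma~\ref{Lemma:Ipi} (to ensure the assignment descends to the quotients). There is no obstacle from $W(S)$ being a proper subgroup of $\Z^S$: the lift $\ell$ is in $W(S)$ by construction, since it is built as $X[S]^t\alpha$ with integer coordinates.
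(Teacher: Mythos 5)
Your proof is correct, and it takes a genuinely different route from the paper's. The paper argues indirectly through the ambient groups $Z(S)$ and $Z(T)$: it invokes the cited fact that $\LG(\cdot)$ is the torsion part of $Z(\cdot)$ whose free rank equals the nullity, notes that $\abs{S}-\rank(S)=\abs{T}-\rank(T)$ when the rank drops by one, and concludes that the surjection $Z(S)\twoheadrightarrow Z(T)$ must already be surjective on torsion, since a surjection between free abelian groups of the same finite rank cannot send elements of infinite order to torsion. Your argument instead works entirely at the level of representatives: you lift an arbitrary $k\in W(T)$ to $W(S)$ by moving within the fibre $\alpha_0+\ker(X[T]^t)$, using the rank jump to see that $x_s$ has nonzero component in $\spa(X[T])^\perp=\ker(X[T]^t)$, so that $\alpha\mapsto x_s^t\alpha$ is surjective on that kernel and the extra coordinate can be forced to be $0\in\Z$. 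This is more elementary (pure linear algebra over $\R$, no structure theory of finitely generated abelian groups) and it establishes the strictly stronger statement $\pi_{S,T}(W(S))=W(T)$ --- a nice counterpoint to the paper's remark that this equality can fail when $\rank(S)=\rank(T)$. The descent to the quotients via Lemma~\ref{Lemma:Ipi} is exactly as you describe, and your closing check that $\ell=X[S]^t\alpha$ really lies in $\Z^S$ (its $T$-coordinates form the integer vector $k$, its $s$-coordinate is $0$) closes the only point where one could worry.
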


 This section does not contain any proofs. They are postponed to 
 Section~\ref{Section:Proofs}.

 \begin{Remark}
 Several structures that are related to the layer groups have appeared in the literature.    
 This includes the groups
  $\DAM(S) := (X[S] \R^S \cap \Z^d) /  X[S] \Z^S$
  and  $\FM(S) := \Z^d / X[S] \Z^S$
  in the context of representable arithmetic matroids \cite{moci-adderio-2013}
  and representable matroids over $\Z$ \cite{fink-moci-2016}.
  If $m(S)$ denotes the multiplicity of the set $S$ in the arithmetic matroid defined by $S$, 
  one can show that
  $\abs{\LG(S)} = m(S)  =  \abs{\DAM(S)} $. This is equal to the cardinality of the torsion part of $\FM(S)$ 
  (see also \cite[Remark~2.12 and Theorem~2.17]{delucchi-riedel-2015}).    
  For our purposes the layer groups are most suitable:
  the layer groups and the $\DAM$-groups encode similar information and they are in some sense dual to each other.
  However, our algorithm requires the existence of canonical maps between the layer groups that have certain properties (Lemma~\ref{Lemma:Injection}
  and Lemma~\ref{Lemma:Surjection}). There are also canonical maps between the $\DAM$-groups, but the arrows point in the other direction,
  hence these groups cannot be used directly in the algorithm.
\end{Remark}

\subsection{The algorithm}

 \begin{Theorem}
    \label{Theorem:ItWorks}
    The algorithm in Figure~\ref{Figure:AlgorithmTA} correctly calculates the poset of layers of the central toric arrangement defined by the list $X$.
 \end{Theorem}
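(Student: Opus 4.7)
The plan is to reduce the statement to two independent claims about the data assembled by the algorithm: (i) after contracting the blue arcs, the vertex set of $\Gcal$ is in bijection with $\Ccal(\Acal_T)$; and (ii) the remaining (black) arcs encode exactly the covering relations of $\Ccal(\Acal_T)$ under that bijection. Throughout I will use the bijections $\bar\varphi_S \colon \LG(S) \to C(S)$ from \cite[Remark~2.31]{delucchi-riedel-2015} and the induced maps $\bar\pi_{S,T} \colon \LG(S) \to \LG(T)$ from Lemma~\ref{Lemma:Ipi} and Lemma~\ref{Lemma:Wpi}.

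For (i), I would first observe that every layer $L \in \Ccal(\Acal_T)$ has a \emph{maximal defining set} $S_L := \{ i \in [N] : L \subseteq S_{x_i} \}$, and that $L$ is represented, via $\bar\varphi_{S'}$, by a unique element $c_{S'} \in \LG(S')$ for every $S' \subseteq S_L$ with $\rank(S') = \rank(S_L)$. Then I would argue that the equivalence classes produced by contracting blue arcs are exactly the fibers of the assignment $(S,c) \mapsto \bar\varphi_S(c)$: whenever $T = S \setminus \{s\}$ with $\rank(T)=\rank(S)$, Lemma~\ref{Lemma:Injection} asserts that $\bar\pi_{S,T}$ is injective, so for each $c \in \LG(S)$ the vertex $(S,c)$ gets identified with exactly one vertex of $C(T)$, namely $(T, \bar\pi_{S,T}(c))$, and these represent the same layer because $H(S, k) = H(T, \pi_{S,T}(k))$ whenever $x_s$ is linearly dependent on $X[T]$. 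Iterating, one shows the blue-connected component of $(S,c)$ is the full preimage of $\bar\varphi_S(c)$, giving the desired bijection.

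For (ii), I would check that the black arcs correspond to covering relations. A covering pair $L' \lessdot L$ in $\Ccal(\Acal_T)$ has $\dim L' = \dim L - 1$, hence $\rank S_L = \rank S_{L'} + 1$; in particular there is some $s \in S_L \setminus S_{L'}$ and setting $T := S_L \setminus \{s\}$ one gets $\rank(T) = \rank(S_L) - 1$. By Lemma~\ref{Lemma:Surjection}, $\bar\pi_{S_L,T}$ is surjective, so every $c' \in \LG(T)$ representing $L'$ is hit by some $c \in \LG(S_L)$ representing a layer contained in $L'$; the layer $\bar\varphi_{S_L}(c)$ lies in $L'$ because containment of layers is equivalent to compatibility of the affine data $k$ modulo $\Z^{d}$, which is precisely what $\bar\pi_{S_L,T}(c) = c'$ records. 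Conversely, every black arc $(T,c') \to (S,c)$ with $T = S \setminus \{s\}$ and $\rank(T) < \rank(S)$ produces by construction a pair of layers with $\bar\varphi_S(c) \subsetneq \bar\varphi_T(c')$ differing in dimension by $1$, i.e.\ a cover. Combining these two directions yields that the Hasse diagram of $\Ccal(\Acal_T)$ is exactly the contracted digraph.

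The main obstacle, as I see it, is the bookkeeping in part (i): one must show that the blue-connected component of $(S,c)$ really is all of $\bar\varphi_{S}(c)$'s preimage and not some smaller set. The cleanest approach is to fix any representative $S'$ with $\rank(S') = \rank(S_L)$ and $S' \subseteq S_L$, then connect it to $S_L$ by a sequence of single-element additions staying within $S_L$ and at constant rank, each of which is a blue arc whose $\LG$-labels are compatible by Lemma~\ref{Lemma:Injection}. This together with the functoriality $\bar\pi_{S,T} \circ \bar\pi_{R,S} = \bar\pi_{R,T}$ (immediate from the definition via coordinate projections) closes up the argument.
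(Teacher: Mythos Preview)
Your plan for (i) is sound and essentially matches the paper's argument: the blue-connected component of $(S,c)$ is exactly the preimage of the layer $\bar\varphi_S(c)$, because every defining set of a layer $L$ is sandwiched between some basis of its flat and the maximal set $S_L$, and single-element additions within $S_L$ at constant rank are precisely the blue arcs. (Minor slip: under reverse inclusion, $L' \lessdot L$ means $L \subsetneq L'$, so $\dim L' = \dim L + 1$, not $-1$; your rank equation $\rank S_L = \rank S_{L'}+1$ is correct regardless.)

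Part (ii), however, has a genuine gap in the direction ``every cover relation is produced by some black arc''. You claim that for $L' \lessdot L$ one can find $s \in S_L \setminus S_{L'}$ with $\rank(S_L \setminus \{s\}) = \rank(S_L) - 1$. This fails whenever no element of $S_L\setminus S_{L'}$ is a coloop of $S_L$. Take $d=2$, $X = \big((1,0),(0,1),(1,1)\big)$, $L = \{0\}$, $L' = S_{x_1}$; then $S_L = \{1,2,3\}$, $S_{L'} = \{1\}$, and removing either $2$ or $3$ from $S_L$ leaves a set of rank $2$, not $1$. Even when such an $s$ does exist, your $T := S_L \setminus \{s\}$ will in general strictly contain $S_{L'}$, and since $S_{L'}$ is the \emph{maximal} defining set of $L'$ one then has $L' \notin C(T)$; there is no element of $\LG(T)$ representing $L'$ for Lemma~\ref{Lemma:Surjection} to hit.

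The fix --- which is what the paper does --- is to build the pair from the bottom up rather than peel from the top down: choose an independent $T' \subseteq S_{L'}$ with $\rank(T') = \rank(S_{L'})$, then extend to $S' = T' \cup \{s\}$ for some $s \in S_L$ with $\rank(S') = \rank(S_L)$ (possible because $S_{L'} \subseteq S_L$ and the ranks differ by one). Now $L' \in C(T')$, $L \in C(S')$, the pair $(T', S')$ is of the form the algorithm processes, and Lemma~\ref{Lemma:Key}(ii) supplies the black arc. Your part (i) then identifies the auxiliary vertices $(T',\cdot)$ and $(S',\cdot)$ with the vertices you actually wanted.
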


 \begin{figure}[ht]
\begin{algorithmic}
\State \textbf{INPUT:} A list $X = (x_1,\ldots, x_N)$ of elements of $\Z^d$ that spans $\R^d$
\vspace*{1mm}
\State \textbf{OUTPUT:} A directed graph, the Hasse diagram of the poset of layers of the toric arrangement defined by $X$
\vspace*{2mm}
\State \textbf{ALGORITHM:}
\State   For each $S\subseteq [N]$, calculate $\LG(S)$
\State Create a digraph $\Gcal$ with vertices $  \{   v_{S,g} :  S \subseteq [N] \text{ and } g\in  \LG(S)\}$ and no edges
\State \forOL{pairs $(T,S)\subseteq [N]^2$ with $T=S\setminus \{s\}$ for some $s\in S$} 
\State \indentOL
      \ifOL{$\rank(S) = \rank(T)$ } %
\State \indentOL \indentOL
      \forOL{$h\in \LG(S)$} \quad 
          add a blue arc
          $(v_{T,\pi_{S,T}(h)},   v_{S,h})$ to $\Gcal$
\State \indentOL  
  \elseOL 
   \State  %
\State \indentOL \indentOL 
  \forOL{$h\in \LG(S)$} \quad 
       add a black arc
       $(v_{T,\pi_{S,T}(h)}),   v_{S,h})$ to $\Gcal$
\State Contract the blue arcs in $\Gcal$ 
\State \Return  $\Gcal$
\end{algorithmic}
\caption{An algorithm that calculates the poset of layers of a central toric arrangement}
\label{Figure:AlgorithmTA}
\end{figure}

\begin{Remark}
 The algorithms presented in this note are not very efficient.
 Recall that by differentiating the binomial theorem, one obtains the formula $\sum_{k=0}^N k \binom{N}{k} = N \cdot 2^{N-1}$.
 This is the number of iterations of the outer for-loop in the algorithm.
 Hence a lower bound for the running time of both algorithms is $\Omega(N \cdot 2^{N-1})$.

   Let $M\in \Z$ be an upper  bound for the absolute values of the entries of the matrix $X$.
   Since $\abs{\LG(S)}$ is equal to the multiplicity of the set $S$ in the corresponding arithmetic matroid,
   it follows from the results mentioned in Subsection~\ref{Subsection:MatroidalStructures} that the size of a layer group is bounded
   from above by the maximal value that the determinant of a square submatrix of $X$ may attain.
   By the Leibniz formula, this is bounded from above by
   $d! M^d$.
   This bounds the number of iterations of the two inner for-loops.
   In total, an upper bound for the running time of the algorithm is
     $ O(  N \cdot  2^{N-1} \cdot d! \cdot M^d)$.
   Hence if we fix $d$ and $N$, it is polynomial in $M$.
   In the totally unimodular case or for hyperplane arrangements, the running time is in
   $ \Theta( N \cdot 2^{ N-1 } )$. 
 \end{Remark}

\subsection{Towards a generalization}

 It would be interesting to generalize our algorithm to affine toric arrangements or more generally, 
 to some of the arrangements appearing in the context of $G$-semimatroids \cite{delucchi-riedel-2015}.
 We believe that such a generalization may be possible, but leave it to the reader to construct it.
 As a first step, we present a generalization of the lattice $W(S)$ to the affine case.     
 Let $X$ be a $(d\times N)$-matrix and $c\in \R^N$.
 This data determines an affine toric arrangement in the real or complex torus.
 Let $S\subseteq [N]$ and $k\in \Z^S$. This defines the subspace
  \begin{align}
      H(S,k) = \{ \alpha \in \R^d:  X[S]^t \alpha = k + c_S   \}.
  \end{align}
 Here, $c_S$ denotes the vector $(c_i)_{i \in S} \in \R^S$.
 Delucchi and Riedel  \cite{delucchi-riedel-2015} define
    $W (S) := \{ k \in  \Z^S : H(S, k) \neq \emptyset \}$.
    This implies that in the affine case, $W(S)$ is an affine lattice:
    \begin{align}
      W(S) &= \{ k \in \Z^S :   (X[S]^t)^{-1}(k + c_S) \neq \emptyset \}   
      \\
      &= \{ k \in \Z^S :  \exists u\in \R^d \text{ \st }  X[S]^t u  = k + c_S \}
      \\
      &= (X[S]^t \R^d - c_S) \cap \Z^S .     
    \end{align}

  \section{Proofs}
  \label{Section:Proofs}

 \begin{proof}[Proof of Lemma~\ref{Lemma:Ipi}]
  This is true because $\pi_{S,T} (X[S]^t \alpha) = X[T]^t \alpha$ for all $\alpha\in \Z^d$.
 \end{proof}

 \begin{proof}[Proof of Lemma~\ref{Lemma:Wpi}]
   Let us denote the columns of the matrix $X[S]$ by $s_1,\ldots, s_{k+1}\in \Z^d$. Then we may assume that  $X[T]$ consists of the columns
   $s_1,\ldots, s_k$.

 Let $z=(z_1,\ldots, z_{k+1}) \in \Z^{k+1}$. 
 By definition, $z \in W(S)$  if and only if there exists $\alpha \in \R^d$ \st
  $s_1^t \alpha = z_1, \ldots, s_{k+1}^t \alpha = z_{k+1}$.

   Let $z'=(z'_1,\ldots, z'_{k}) \in \Z^{k}$. By definition, 
 $z \in W(T)$  if and only if there exists $\beta \in \R^d$ \st
  $s_1^t \beta = z'_1, \ldots, s_{k}^t \beta = z_{k}$.
  An $\alpha$ (as described above) must satisfy the same equations as a $\beta$ and one additional equation. 
  Hence $\Pi_{S,T}(W(S)) \subseteq W(T)$.
 \end{proof}
  
  Note that   if $\rank(S)=\rank(T)$,
   $\pi_{S,T}(W(S)) \subseteq W(T)$, but in general, equality does not hold:  by   \eqref{equation:dependencyZ} below
  for $(z_1,\ldots, z_k)\in W(T)$, the last coordinate of the preimage is equal to 
  $\sum_{i=1}^k \lambda_i z_i$. This is not necessarily an integer.

 \begin{proof}[Proof of Lemma~\ref{Lemma:Injection}]
   As before, let us denote the columns of the matrix $X[S]$ by $s_1,\ldots, s_{k+1}\in \Z^d$. Then we may assume that  $X[T]$ consists of the columns
   $s_1,\ldots, s_k$. 
%
  By assumption, there exist $\lambda_1,\ldots, \lambda_k \in \R$ \st 
  \begin{equation}
   \label{eq:dependency}
  s_{k+1} = \sum_{i=1}^k \lambda_i  s_i.
  \end{equation}
 Let $z=(z_1,\ldots, z_{k+1}) \in W(S)$. 
 By definition, $z \in W(S)$  if and only if there exists $\alpha \in \R^d$ \st
  $s_1^t \alpha = z_1, \ldots, s_{k+1}^t \alpha = z_{k+1}$.
  By \eqref{eq:dependency}, the last condition implies
  \begin{equation}
  \label{equation:dependencyZ}
    s_{k+1}^t \alpha =  \sum_{i=1}^k \lambda_i  s_i^t \alpha = \sum_{i=1}^k \lambda_i z_i = z_{k+1}.
  \end{equation}
  By Lemma~\ref{Lemma:Wpi}, we can restrict $\pi_{S,T}$ to a map $W(S)\to W(T)$ that we will also denote by $\pi_{S,T}$.
  Suppose $\pi_{S,T}(z) = (0,\ldots, 0) $. 
  Since $\pi_{S,T}$ is a projection map, this implies $z=(0,\ldots, 0,z_{k+1})$.
  Using \eqref{equation:dependencyZ} we can deduce $z_{k+1}=0$. Hence $z=0$ and $\pi_{S,T} : W(S)\to W(T)$ is injective.
  Using Lemma~\ref{Lemma:Ipi}, we can deduce that $ \bar\pi_{S,T} : \LG(S) \to \LG(T)$ is injective as well.
\end{proof}

 \begin{proof}[Proof of Lemma~\ref{Lemma:Surjection}]
   It is known that
   $\LG(S)$ is the torsion part of $Z(S)$ and the rank of $Z(S)$ is equal to the nullity of $S$
     \cite[Lemma~2.11]{delucchi-riedel-2015}.
  As $ \rank(S) =  \rank(T) +1 $, $Z(S) \cong \Z^{\eta} \oplus \LG(S)$ and $Z(T) \cong \Z^{\eta} \oplus \LG(T)$, where 
  $\eta= \abs{S}- \rank(S) = \abs{T}-\rank(T)$.

  Let us consider the map $\bar\pi_{S,T} : Z(S) \to Z(T)$. By Lemma~\ref{Lemma:Ipi}, it is well-defined and surjective.
  Since a group homomorphism must map an element of finite order to another element of finite order,
  it is clear that $\bar\pi_{S,T}|_{\LG(S)} (\LG(S)) \subseteq \LG(T)$. Suppose that this inclusion is strict.
  Since $\bar\pi_{S,T}$ is surjective, this implies that
  an element of $Z(S)$ of infinite order is mapped to $\LG(S)$.
  But this is impossible: since $\bar\pi_{S,T}$ is surjective, by a dimension argument, the elements of infinite order of $Z(S)$ must be mapped to elements of infinite order of $Z(T)$.
 \end{proof}

 \begin{Lemma}
 \label{Lemma:Key}
   Let $S$, $T$ as above.
   Recall that $\bar\varphi_S : \LG(S) \to C(S) $ and $\bar\varphi_T : \LG(T) \to C(T)$ are bijections between the layer groups and the set of layers
   of the toric arrangement that are defined by $S$ and $T$, respectively. These  maps allow us to identify elements of the layer groups with the corresponding layers.

  \begin{enumerate}[(i)]
    \item  Suppose $ \rank(S) = \rank(T) $. Then $ p\in C(S) $ and $ q\in C(T)$ are identical (as subsets of the torus)
    if and only if $ \bar\pi_{S,T}(p) = q $.    
    \item  Suppose $\rank(S)>\rank(T)$. Then $p\in C(S)$ covers $ q\in C(T)$ 
    if and only if $\bar\pi_{S,T}(p) = q $.    
  \end{enumerate}

\end{Lemma}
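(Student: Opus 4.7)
The plan is to reduce both parts to a single geometric observation in $\R^d$ and then pass to the torus via the bijections $\bar\varphi_S$ and $\bar\varphi_T$. The central observation is that for any $k \in W(S)$, writing $k' = \pi_{S,T}(k) \in W(T)$ (which lies in $W(T)$ by Lemma~\ref{Lemma:Wpi}), the defining equations of $H(T,k')$ form a subset of those of $H(S,k)$; hence $H(S,k) \subseteq H(T,k')$ in $\R^d$. Both sets are non-empty affine subspaces of dimensions $d-\rank(S)$ and $d-\rank(T)$ respectively. Projecting to the torus $\R^d/\Z^d$, and using that the quotient map is a local diffeomorphism, this gives $p \subseteq q'$ with $\dim q' - \dim p = \rank(S)-\rank(T)$, where $p := \bar\varphi_S(k + I(S))$ and $q' := \bar\varphi_T(k' + I(T))$.

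In case (i), equal ranks force $H(S,k) = H(T,k')$ (an affine subspace contained in another of the same dimension), so $p = q'$ as subsets of the torus. If additionally a layer $q \in C(T)$ satisfies $p = q$ as subsets, then both $q$ and $q'$ lie in $C(T)$ and both contain the non-empty set $p$; but distinct elements of $C(T)$ are disjoint, being distinct connected components of the same set $\bigcap_{i\in T} S_{x_i}$. Hence $q = q'$, which by bijectivity of $\bar\varphi_T$ translates to $\bar\pi_{S,T}(p) = q$ in $\LG(T)$. The converse direction in (i) is immediate: $\bar\pi_{S,T}(p) = q$ means $q = q'$, and we already have $p = q'$.

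For case (ii), $\rank(S) = \rank(T)+1$ gives a codimension-one strict inclusion $H(S,k) \subsetneq H(T,k')$, and thus $p \subsetneq q'$ in the torus with $\dim p = \dim q' - 1$. If $p$ covers $q$ in the poset, then in particular $p \subsetneq q$, and the same disjointness argument applied to the two layers $q, q' \in C(T)$, both containing the non-empty $p$, yields $q = q'$, hence $\bar\pi_{S,T}(p) = q$. Conversely, if $\bar\pi_{S,T}(p) = q$, then $p \subsetneq q = q'$ with $\dim q = \dim p + 1$; any intermediate layer $r$ with $p \subsetneq r \subsetneq q$ would, since a connected submanifold of the same dimension that is both open and closed in a connected ambient submanifold must coincide with it, satisfy $\dim p < \dim r < \dim q$, which is impossible for integer dimensions. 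So $p$ covers $q$.

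The main obstacle is book-keeping: ensuring that the set-theoretic containment $H(S,k) \subseteq H(T,k')$ in $\R^d$ descends to a containment of the prescribed layers $p \subseteq q'$ (rather than some larger union of layers) after passing to the quotient. This rests on the fact, essentially built into the definition of $\bar\varphi$, that the image of $H(\cdot,\cdot)$ under the covering map $\R^d \to \R^d/\Z^d$ is precisely one layer, together with the disjointness of distinct connected components within $C(T)$.
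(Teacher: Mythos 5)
Your proof is correct and follows essentially the same route as the paper's: lift the layers to affine subspaces $H(S,k)\subseteq H(T,\pi_{S,T}(k))$ in $\R^d$, compare dimensions, and project back to the torus. The only difference is organisational: where the paper proves the forward implications by choosing compatible liftings of both $p$ and $q$ and matching coordinates, you deduce them from the backward containment together with the disjointness of distinct connected components in $C(T)$, which is a valid (and slightly tidier) variant of the same argument.
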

\begin{proof}
 \begin{asparaenum}[(i)]
  \item 
   ``$\Rightarrow$''. 
   Let us consider liftings of the layers $p$ and $q$ to $\R^d$, \ie affine subspaces 
   $H(S,k_p)$ and $H(T,k_q)$ that are projected to $p$ and $q$ in the torus, respectively. Here $k_p\in W(S)\subseteq\Z^S$ and $k_q\in W(T)\subseteq\Z^T$ denote suitable vectors.
   By assumption, $p$ and $q$ are identical. Hence we are able to choose $k_p$ and $k_q$ \st 
   $ \bigcap_{j\in S} H( \{j\}, (k_p)_j )   = \bigcap_{j\in T} H( \{j\}, (k_q)_j )$. 
   Since each $H( \{j\}, (k_p)_j )$ is an affine linear space and $p\neq \emptyset$, this implies that $ (k_p)_j = (k_q)_j $ for all $ j \in T $
   and $ H( \{s\}, (k_p)_s ) \supseteq H(T,k_q) $,  where $ s $ denotes the unique element of $S\setminus T$.
   Hence $\pi_{S,T}(k_p) = k_q$. Since $k_p$ represents $p \in \LG(S)$
   and $k_q$ represents $q\in \LG(T)$, this implies
   $\bar\pi_{S,T}( p) = q$.

  ``$\Leftarrow$''.
  By definition, $p$  is a connected component of $\left(\bigcup_{k\in \Z^S} H(S,k) \right)/ \Z^d$.
  This means that for some $k_p \in \Z^S$, $p = H(S,k_p) + \Z^d$ holds.
  Let $k_q := \pi_{S,T}(k_p) \in W(T)$. 
  By assumption, $k_q$ represents $q\in \LG(T)$.
  Since $ T \subseteq S $, we have $ H(S,k_p) = H(T,k_q) \cap H(\{s\}, (k_p)_s) \subseteq  H(T,k_q)$. 
  On the other hand,  $ \rank(S) = \rank(T) $ 
   implies that the two affine spaces  have the same dimension. Hence
  we must have equality, \ie $ H(S,k_p) = H(T,k_q) $. By projecting this back to the torus, we obtain $ p = q $.
\medskip

\item 
 Since $\rank(S)=\rank(T)+1$, $p$ covers $q$ (as poset elements) if and only if 
 $p\subsetneq q$ (as subsets of the torus).

 ``$\Rightarrow$''. 
   As above, we  consider liftings of the layers $p$ and $q$ to $\R^d$, \ie affine subspaces 
   $H(S,k_p)$ and $H(T,k_q)$ that are projected to $p$ and $q$ in the torus, respectively. Again,  $k_p\in \Z^S$ and $k_q\in \Z^T$ 
   denote suitable vectors.
   By assumption, $p \subsetneq q$. Hence we are able to choose $k_p$ and $k_q$ \st 
   $ \bigcap_{j\in S} H( \{j\}, (k_p)_j ) = H(S, k_p) \subsetneq H( T, k_q )  = \bigcap_{j\in T} H( \{j\}, (k_q)_j )$. 
   Since each $H( \{j\}, (k_p)_j )$ is an affine linear space and $p\neq \emptyset$, this implies that $(k_p)_j= (k_q)_j$ for all $j\in T$
   and $ H( \{s\}, (k_p)_s ) \not\supseteq H(T,k_q) $,  where $ s $ denotes the unique element of $S\setminus T$.
   Hence $\pi_{S,T}(k_p) = k_q$. Since $k_p$ represents $p \in \LG(S)$
   and $k_q$ represents $q\in \LG(T)$, this implies
   $\bar\pi_{S,T}( p) = q$.

  ``$\Leftarrow$''.
  By definition, $p$  is a connected component of $\left(\bigcup_{k\in \Z^S} H(S,k) \right)/ \Z^d$.
  This means that for some integer $k_p$, $p = H(S,k_p) + \Z^d$ holds.
  Let $k_q:=\pi_{S,T}(k_p)$. 
  By assumption, $k_q$ represents $q\in \LG(T)$.
  Since $ T \subseteq S $, we have $ H(S,k_p) = H(T,k_q) \cap H(\{s\}, (k_p)_s) \subseteq  H(T,k_q)$. 
  On the other hand, since $ \rank(S) = \rank(T)+1 $, 
  the containment must be strict.
  By projecting this back to the torus, we obtain $p \subsetneq q$.
\qedhere
  \end{asparaenum}
\end{proof}

Now we are ready to prove the correctness of the algorithm.

\begin{proof}[Proof of Theorem~\ref{Theorem:ItWorks}]
  First note that the algorithm does indeed construct a directed acyclic graph:
  we can  assign to each vertex $v_{S,g}$ a rank, namely the matroid theoretic rank of $S$.
  Note that blue arcs always connect vertices of the same rank, while black arcs always point from a rank $k$ vertex to a rank $k+1$ vertex.
  This remains true after contracting the blue edges, hence the resulting graph contains no directed cycles.

  Now the result follows essentially from Lemma~\ref{Lemma:Key}.
  Two things need to be considered:
  \begin{enumerate}[(a)]
   \item  The algorithm computes a poset with the correct number of elements.
   \item The cover relations are correct.
  \end{enumerate}

  \begin{asparaenum}[(a)]
   \item 
  In the first line, the algorithm  creates all layers.
  But unless the matrix $X$ is totally unimodular, there will be a layer $L$ that is created more than once.
  This happens if there are two sets  $ S_1, S_2 \subseteq [N] $ \st $ L \in C(S_1) \cap C(S_2) $. 
  Given a layer $L$, the set of subsets of $[N]$ that define $L$
  can be partially ordered by inclusion and has a unique maximal element (the union of all such sets).
  This poset
  is canonically isomorphic to a subposet of $\Ccal(\Acal_T)$.
  By Lemma~\ref{Lemma:Key}.(i), the algorithm adds 
  for each layer $L$
  exactly the cover relations of this poset to the graph as blue edges, which are later contracted.
  Hence there will be exactly one vertex per layer.
  \item 
  We need to show 
  that 
  all cover relations constructed by the algorithm are indeed cover relations 
  in $\Ccal(\Acal_T)$ and that the algorithm  constructs all cover relations.
  The first statement follows directly from  Lemma~\ref{Lemma:Key}.(ii).

  Let $p\in C(S)$ and $q\in C(T)$, for suitable $S, T\subseteq [N]$. Let $u_p$ and $u_q$ denote the corresponding vertices in the graph $\Gcal$.
  We need to prove that it is sufficient to construct only the cover relations in $\Ccal(\Acal_T)$ that arise from pairs $(S,T)$
  with $T\subseteq S$ and $\abs{S\setminus T} = 1$, $\rank(S) = \rank(T) + 1$ and $\bar \pi_{S,T}(p)=q$.
  This is not completely straightforward since there \emph{are} cover relations with  
  $T\not \subseteq S$:
  for example, let $q$ be  (a quotient of) an affine line determined by three planes $H_1,H_2,H_3 \subseteq \R^3$
  and let $p$ be (a quotient of) a point in this line that is determined by ten planes in $\R^3$, including $H_1$ and $H_2$, but not $H_3$.

  Since $\Ccal(\Acal_T)$ is a graded poset,
  a layer defined by $S$ can only cover a layer defined by $T$ if $\rank(S)=\rank(T)+1$. 
  We also need that the flat spanned by $S$ contains the flat spanned by $T$ (otherwise,  the layer 
  defined by $S$ will not be a subset of the layer defined by $T$).
  This implies that there are independent sets $S'\subseteq S$ and $T'\subseteq T$ with $\rank(S')=\rank(S)$ and $\rank(T')=\rank(T)$ (hence $p\in C(S')$ and $q\in C(T')$).
  If $p$ covers $q$, by  Lemma~\ref{Lemma:Key}.(ii), the algorithm will add an arc between the vertices $w_p$ and $w_q$
  that correspond to $p\in C(S')$ and $q\in C(T')$, respectively. 
  By part (a) of this proof,
  $u_p$ and $w_p$, as well as $u_q$ and $w_q$ will be the same vertex after contracting the blue edges.
\qedhere
  \end{asparaenum}
\end{proof}

\renewcommand{\MR}[1]{} 

\bibliographystyle{amsplain}
\providecommand{\bysame}{\leavevmode\hbox to3em{\hrulefill}\thinspace}
\providecommand{\MR}{\relax\ifhmode\unskip\space\fi MR }
\providecommand{\MRhref}[2]{%
  \href{http://www.ams.org/mathscinet-getitem?mr=#1}{#2}
}
\providecommand{\href}[2]{#2}

\end{document}